\documentclass[a4paper, 12 pt]{article} 



\usepackage{amsmath} 
\usepackage{amssymb}  
\usepackage{amsthm}
\usepackage{graphicx}
\usepackage{tikz-cd}

\newtheorem{definition}{Definition}
\newtheorem{theorem}{Theorem}
\newtheorem{proposition}{Proposition}


\def\b{\mathbf{b}}

\def\v0{\mathbf{0}}

\def\v0{\mathbf{0}}


\newcommand{\R}{{\mathbf R}}

\newcommand{\cC}{{\mathcal C}}

\newcommand{\spanek}{\mathrm{span}}


\title{\LARGE \bf
The singular set and aspects of global control dynamics
}

\author{Efthimios Kappos}

\begin{document}

\maketitle
\thispagestyle{empty}
\pagestyle{empty}

\begin{abstract}

The global approach to control systems which we have been pursuing in other work favours the study of dynamics achievable through control. It employs certain globally defined geometric objects and attempts to describe them in the general case. In this work, we define the \emph{singular set} and examine some of its general properties. We then briefly examine its role in the global design of control dynamics.

\end{abstract}

\section{Introduction}

Linear control design is by its nature local; hence the main interest is in obtaining a single stable equilibrium for the resulting control dynamics. In global control, the dynamics considered can include a number of equilibrium points of different stability types (index) and perhaps other, more complicated invariant sets, such as limit cycles.
It is therefore relevant to consider the set of all points which can be turned into equilibria through some choice of control. Choosing the term control indicatrix at a given state for the subset of the tangent space containing all possible control vectors, the set we just described is the subset of the state space where the control indicatrix contains the zero vector. It is the singular set of this work.
As a simple example, for a single-input linear control system in its controller-canonical form, the singular set is a line (one-dimensional subspace) in the direction of the first state component.

We shall give results about the generic dimension of the singular set, distinguishing between the control-affine case and the more general context of a control fibration (details later).
Since the state spaces we consider are manifolds, we must allow for singularities in the control distribution, in other words for subsets where the rank of the the span of the control vector fields drops. 
The special category of drift-free control systems is not our main consideration since, in this case, and assuming a control indicatrix containing a neighbourhood of zero, the singular set is the whole of the state space manifold.

In all the other cases, the dimension of the singular set is generically the same as the dimension of control. This is convenient for the global control methodology we have in mind, which uses submanifolds transverse to the control distribution (or fibration) and of complementary dimension. On such manifolds, we have natural dynamics defined (independent of control), which we have called control-transverse dynamics in~\cite{ek2}. Taking advantage of the transversality of the control, such a manifold can then be made invariant for the control dynamics and, moreover, we can fully control the dynamics in the transverse directions. 
If the control-transverse manifold is also transverse to the singular set, we obtain isolated equilibria, whose stability type we have some control over by the choice of the geometric object, the transverse manifold.
We indicate the main lines of this global control design method in the last section.

\section{The singular set in some common control settings}

In this work, the \textbf{singular set} is defined as the subset of state space of points where we have available the zero vector. We shall denote it by $\Sigma$. By way of motivation, we first examine the form it takes in each of the three cases we consider and describe the \emph{expected} dimension and geometry. More precise results will be stated and proved in the next section.

\subsection{Linear control systems}\label{lcs1}

For a linear control system
\[ \dot x = Ax + Bu , \quad x \in \R^n , \quad u \in \R^m , \]
writing $W^m = \spanek ( B) = \spanek ( b_1 , \ldots , \b_m )$ for the constant control distribution (assumed of full rank $m$ of course), the singular set is defined as
\[ \Sigma = \{ x \in \R^m : \, Ax \in \spanek (B) \} . \]
Since the $b_i$ are linearly independent, let us take complete to a basis of $\R^n$, $( e_1 , \ldots , e_{n-m}, b_1 , \ldots b_m )$ and write $\R^n = U^{n-m} \oplus W^m$ for the resulting direct sum decomposition. If $x_1 , x_2$ are the corresponding components of the state and we partition the matrix $A$ in blocks in the same way,
\[ A = \left[ \begin{array}{cc} A_{11} & A_{12} \\ A_{21} & A_{22} \end{array} \right] , \]
we see that $x \in \Sigma$ if and only if 
\[ A_{11} x_1 + A_{12} x_2 = 0 . \]
Generically, we expect that this system of $(n-m)$ equations in $n$ variables will have as solution an $m$-dimensional subspace, which is the singular set in this case.

If the system is controllable and is given in controller-canonical form, this is seen very easily. We do the single-input case, the general Brunovsky canonical form being similar.
Since the state matrix is
\[ A = \left[ \begin{array}{rrrrr} 0 & 1 & \cdots & \cdots & 0 \\ 0 & 0 & 1 & \cdots & 0 \\ \vdots & \vdots &\vdots &\vdots & \vdots \\ 0 & 0 & \cdots & 0 & 1 \\
- a_1 & - a_2 & \cdots & \cdots & - a_n  \end{array} \right] \]
and $b = e_n$, the singular set is the line $x_2 = 0 , \ldots , x_n =0$, i.e.\ the $x_1$ axis. In the general case, it is an $m$-dimensional subspace.

\subsection{Control-affine systems in $\R^n$}

We are interested in control-affine systems in $\R^n$
\[ \dot x = f(x) + \sum_{i=1}^m u_i g_i (x) \]
where the "drift" vector field is general (not the zero vector field).
Assuming unlimited control action, the control indicatrix at each point is an affine subspace of dimension equal to $\dim \spanek ( g_1 (x) , \ldots , g_m (x) )$. We expect this dimension to be $m$ on an open and dense subset of state space.

The singular set is again
\[ \Sigma = \{ x \in \R^n : f(x) \in \spanek ( g_1 (x ) ,  \ldots , g_m (x) ) \} . \]
Reasoning as above, we can make a local argument: near a point where the rank is maximal, consider a basis of the form $( e_1 , \ldots , e_{n-m} , g_1 , \ldots , g_m )$ (all are functions of the state, defined in some neighbourhood.) Decomposing the state in the new variables into $x_1 \in \spanek ( e_1 , \ldots , e_{n-m} )$ and $x_2 \in \spanek ( g_1 , \ldots , g_m )$, the control system takes the form
\[ \left[ \begin{array}{c} \dot x_1 \\ \dot x_2 \end{array} \right] = \left[ \begin{array}{c} f_1 ( x_1 , x_2 ) \\ f_2 ( x_1 , x_2 ) \end{array} \right] + \left[ \begin{array}{c} 0 \\ I_m \end{array} \right] u . \]
Hence $x \in \Sigma$ exactly when $f_1 ( x_1 , x_2 ) = 0$, again a (nonlinear) system of $(n-m)$ equations in $n$ variables, and we expect that the singular set has dimension $m$.
 
As an important special case, consider the form assumed in the back-stepping method, the strict-feedback form:
\begin{align*}
 \dot x_1 & = f_1 ( x_1 ) + g_1 (x_1) x_2 \\
 \dot x_2 & = f_2 (x_1, x_2) + g_2 ( x_1 , x_2 ) x_3 \\
 \cdots & \cdots \\
 \dot x_{n-1} & = f_{n-1} ( x_1 , \ldots , x_{n-1} ) + g_{n-1} ( x_1 , \ldots , x_{n-1} ) x_n \\
 \dot x_n & = f_n (x) + g(x) u 
\end{align*}
This is meant to be a local expression, and the assumption made, that the $g_i$ are non-zero, means that the singular set is simply the solution set of the system of $(n-1)$ equations
\begin{align*}
 0 &= f_1 ( x_1 ) + g_1 (x_1) x_2 \\
 0 & = f_2 (x_1, x_2) + g_2 ( x_1 , x_2 ) x_3 \\
 \cdots & \cdots \\
 0 & = f_{n-1} ( x_1 , \ldots , x_{n-1} ) + g_{n-1} ( x_1 , \ldots , x_{n-1} ) x_n ,
\end{align*}
which, by the implicit function theorem (given the assumptions made), is (locally) a line (a one-dimensional manifold, in fact a graph.) This structure is exploited in the back-stepping methodology, which is a Lyapunov function-based control design (see, e.g..~\cite{khalil}, section 14.3.)

\subsection{Control fibrations}\label{fibr-intro}

We now change gear and define a general control system on a smooth state space manifold $M^n$, using the notion of a \emph{fibration}. The use of the notion of fibration is not new in control theory (see for example~\cite{coron1}), though here we try and make the assumptions as realistic as possible, and also suitable for application of the transversality results to follow.

\begin{definition}
A subset $\cC$ of the tangent bundle $TM$ is a \textbf{control fibration} if 
\begin{equation*}
\begin{tikzcd}
\cC \arrow[r, "i"] \arrow[dr,"p"] & TM \arrow{d}{\pi} \\
 &  M 
\end{tikzcd}
\end{equation*}
with $i$ an inclusion map such that $p = \pi \circ i$ is onto and $( \cC , p, M)$ is a weak (or Serre) fibration. Each fibre is assumed to be a compact, convex subset of the corresponding tangent bundle.
\end{definition}
The choice of this definition is in recognition of the fact that we never have, in practice, unlimited control action and, moreover, by using generalized controls, we can assume that we have available any direction in the convex span of any finite set of control directions. For the purposes of this paper, it allows us to obtain results about genericity which exploit the compactness in a crucial way. Having a convex fibre does not, of course, imply that the fibration is trivial: take as a simple example the usual M\"{o}bius band as a fibre bundle with fibre the unit interval or, even more simply, the unit disk bundle of the sphere, with fibre the vectors of length less than one, with respect to some Riemannian metric.

\section{Genericity results}

\subsection{Linear control systems}

\begin{proposition}
For linear control systems
\[ \dot x = A x+ Bu , \quad x \in \R^n , \quad u \in \R^m , \; m \le n ,  \]
provided the matrix $B$ is of full rank $m$, the set of all state matrices $A$ such that the singular set is an $m$-dimensional subspace is open and dense in the space of all square matrices.
\end{proposition}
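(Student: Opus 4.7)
The plan is to reduce the statement to the standard genericity fact that full-rank matrices form an open dense subset of the matrix space. The key step is to give $\Sigma$ an intrinsic description that makes the condition $\dim \Sigma = m$ transparent. Setting $W^m = \spanek(B)$, the singular set is exactly the kernel of the composite linear map $\bar A \colon \R^n \to \R^n/W^m$ obtained by post-composing $A$ with the quotient projection. Since the codomain has dimension $n-m$, we have $\dim \Sigma = n - \rank \bar A \ge m$, with equality if and only if $\bar A$ is surjective, i.e.\ $\rank \bar A = n-m$. In the adapted basis $(e_1,\ldots,e_{n-m},b_1,\ldots,b_m)$ used in Section~\ref{lcs1}, $\bar A$ is represented by the $(n-m) \times n$ matrix $[A_{11} \mid A_{12}]$, so the intrinsic condition recovers exactly the rank condition suggested there; in particular it does not depend on the choice of the complementary vectors $e_i$.

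Next I would establish openness. The subset of $(n-m)\times n$ real matrices of maximal rank $n-m$ is open, being the complement of the common zero set of the $\binom{n}{n-m}$ maximal minors, each a continuous (in fact polynomial) function of the matrix entries. The assignment $A \mapsto \bar A$ is linear (in the adapted basis it is just the projection onto the first $n-m$ rows), hence continuous, so the preimage in the space of $n \times n$ matrices of the open set of full-rank maps is open.

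For density, the complementary set of rank-deficient $(n-m)\times n$ matrices is a proper real algebraic subvariety of $\R^{(n-m)\times n}$, cut out by the simultaneous vanishing of all maximal minors; full-rank examples certainly exist, so this subvariety has empty interior and its complement is dense. Pulling back by the linear surjection $A \mapsto \bar A$ produces an open dense set in $\R^{n \times n}$. Equivalently, one can exhibit a one-line explicit perturbation: given any $A$, the family $A_{11} + \epsilon I_{n-m}$ (with $A_{12}$, $A_{21}$, $A_{22}$ unchanged) produces a full-rank $[A_{11}+\epsilon I \mid A_{12}]$ for all but finitely many $\epsilon$, so $A$ is arbitrarily close to matrices in the target set. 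There is no real obstacle beyond the intrinsic reformulation of $\Sigma$; the hypothesis that $B$ has full rank $m$ is used only to guarantee that $\dim W^m = m$, so that the quotient $\R^n/W^m$ truly has dimension $n-m$ and the rank bound above is tight.
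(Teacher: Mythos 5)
Your proposal is correct and follows essentially the same route as the paper: both reduce the condition $\dim\Sigma=m$ to the full-rank condition on the $(n-m)\times n$ block $[A_{11}\; A_{12}]$ and then invoke the standard genericity of full rank via nonvanishing of polynomial minors. Your quotient-map reformulation and explicit $\epsilon$-perturbation are just a more carefully packaged version of the paper's argument, not a different method.
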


The proof is a familiar elementary argument.

\begin{proof}
As explained above, the singular set is the set of solutions of a system of $(n-m)$ linear equations in $n$ variables. Provided the $(n-m) \times n$ matrix $[ A_{11} A_{12} ]$ is of full rank, the solution set is an $m$-dimensional subspace. But this happens provided at least one of the $(n-m) \times (n-m)$ sub-matrices has non-zero determinant, an algebraic inequality in its elements. The set of such matrices is thus a union of open, dense subsets.
\end{proof}

\subsection{Control-affine systems on a manifold}

The obvious way of generalizing a control-affine system in a vector space to one in a manifold is to suppose given $n+1$ smooth vector fields on a smooth manifold $M^n$. The first is to play the role of the "drift" and the remaining $n$ vector fields span a distribution $D \subset TM$, which is smooth by construction, but may not have constant rank (see~\cite{isidori}). This leads to the posing of problems such as that of feedback equivalence (see~\cite{bronik1}), which have been well developed within nonlinear control theory. At its core, this is just a manifestation of affine geometry: there is no natural selection of a drift vector field, since any choice of feedback control changes it. This is just saying that an affine subspace of a vector space does not have a natural affine basis.

There is a simple algebraic way of making everything invariant: we consider instead of the control distribution, the quotient bundle $TM/D$. 
\emph{An affine control system (with unbounded control) is then simply a section of this bundle.} 
When the rank of $D$ is constant and equal to to $m$, the quotient bundle is a vector bundle of fibre dimension $(n-m)$.
For any vector bundle $(E, \pi , M)$, where $\pi : E \to M$ is the projection, and the fibre is isomorphic to $\R^k$, there is the obvious zero section $z: M \to E$, $z (p) = (p, 0) \in E_p$, which allows us to identify a copy of $M$ in the total space $E$ with the image of the zero section. We denote it by $Z$. It is a submanifold of $E$ of dimension $n = \dim M$.
The smooth sections of any vector bundle $E \to B$ are denoted by $\Gamma (E)$. Thus, a control-affine system is an element of $\Gamma (TM/D)$. \emph{The zeros of such a section correspond to points of the singular set.}

The rank of the control distribution $D$ is not constant, in fact \emph{cannot} be constant on a general manifold, for topological reasons at least (any vector field on an even sphere must have zeros, for example.) 
The singularities of such distributions, generated by $m$ vector fields, have been studied from a local point of view (\cite{bronik1}, \cite{mormul1}, \cite{mormul2}, \cite{mormul-rouss}), motivated by control theory considerations. Local normal forms have been obtained, though, very quickly, as we increase (co)dimension, moduli appear.
On the other hand, it is a classical topic in singularity theory to consider the stratification of the jet space associated with a smooth map between manifolds according to rank of the derivative of a map (see for example \cite{via-sdm} or \cite{gol-guill}.)
Since we are interested in global aspects, and since we are only considering control-affine systems (with general non-zero drift term), we only need the global genericity results of singularity theory, adapted to the context of the distribution $D$.
This means that we use local charts to identify the $m$-tuple of vector fields with the $n \times m$ matrix they define. As in the case of the jet space $J^1 (M,N)$, we have first a stratification of the space of $n\times m$ matrices according to corank (\cite{gol-guill}, Prop.5.3, p.60) and then by transversality we deduce the existence of submanifolds of $M$ where the control distribution has this corank. 
The lowest stratum, which we shall call $M_0$, consists of points where the matrix has full rank $m$. The higher strata have higher codimension (so lower dimension) and are described in the references sited. The submanifold $M_0$ is an open and dense subset of $M$ (it has zero codimension in $M$.)
Throughout, we consider the Whitney topology on the space of smooth maps between manifolds (see~\cite{gol-guill} for details.)

\begin{theorem}
For a dense subset of the set of $m$-tuples of vector fields, the subset $M_0$ of $M$ where the rank of $D$ is $m$ is the open, dense submanifold of $M$ and the singular set is an $m$-dimensional submanifold of $M_0$. 
\end{theorem}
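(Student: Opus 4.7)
The plan is to prove the statement by two successive transversality arguments: one for the control vector fields, to ensure $M_0$ is of full dimension in $M$, and one for the drift, to ensure the singular set intersects $M_0$ cleanly.

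For the first step, I would identify each $m$-tuple $(g_1, \ldots, g_m)$ with a smooth section of the bundle $\mathrm{Hom}(\underline{\R}^m, TM) \to M$, which in any local chart is a smooth map $M \to M_{n \times m}(\R)$. The target admits the classical stratification by corank, with the corank-$k$ stratum of codimension $k(n - m + k) \geq n - m + 1 \geq 1$. By Thom's transversality theorem (as formulated for jet spaces of sections in \cite{gol-guill}), for a residual subset of such sections the induced local map is transverse to every stratum. The rank-$m$ locus $M_0$ is then the complement of a closed subset of positive codimension in $M$, hence an open dense submanifold.

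For the second step, restrict to $M_0$, on which $D$ has constant rank $m$; the quotient $E := (TM|_{M_0})/D$ is a smooth vector bundle of rank $n - m$. The drift descends to a smooth section $\bar{f} \in \Gamma(E)$, and $\Sigma \cap M_0 = \bar{f}^{-1}(Z)$, where $Z \subset E$ is the image of the zero section. A standard vector-bundle transversality argument shows that for a residual set of drifts $f$, the section $\bar{f}$ is transverse to $Z$; since $Z$ has codimension $n - m$ in $E$, the intersection $\Sigma \cap M_0$ is a submanifold of $M_0$ of codimension $n - m$, that is, of dimension $m$. This is the invariant, global form of the local Jacobian calculation already illustrated in Section~2.2.

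Finally, to produce a single dense set of $(m+1)$-tuples of vector fields for which both conclusions hold, I would intersect the two generic sets: the first is open as well as dense, so the second (residual for each fixed control tuple in the first set) carries over to a residual, and hence dense, subset of the joint space by a Baire-type argument. The step I expect to be the main obstacle is precisely this last combination, together with the bookkeeping of applying Thom's theorem to sections of the quotient bundle over the open manifold $M_0$ rather than over all of $M$; care is needed here, but openness of the rank-$m$ condition ensures that perturbing the drift preserves the first-step conclusion, so the argument goes through.
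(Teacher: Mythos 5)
Your proposal follows essentially the same route as the paper: the corank stratification of $n\times m$ matrices plus Thom transversality to get that $M_0$ is open and dense, and then transversality of the induced section of the quotient bundle $TM/D$ over $M_0$ to its zero section $Z$ (codimension $n-m$, hence an $m$-dimensional preimage) to identify the singular set as an $m$-dimensional submanifold, with genericity in the drift/section. Your final Baire-type step combining the two residual sets only makes explicit what the paper leaves implicit (the paper instead patches over a countable cover of $M_0$ by charts with compact closure), so the argument is correct and matches the paper's.
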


\begin{proof}
The first part follows from the singularity theory considerations above.
We shall obtain the singular set as the intersection of the section of the quotient bundle with the $n$-dimensional submanifold $Z$, the zero section of $TM/D$. In a suitable local chart $(U, \phi)$, the bundle is a product bundle, $(TM/D)|_{U_i} \simeq U \times \R^{n-m}$. Cover $M_0$ by a countable collection of charts $(U_i, \phi_i)$, with the closure of each $U_i$ compact. By transversality theory, the set of maps $s : U_i \to (TM/D)|_{U_i}$ which are transverse to the $n$-dimensional submanifold $Z \cap (TM/D)|_{U_i}$ is open and dense in the relevant Whitney topology. But this means that the inverse image of the intersection is a submanifold of $U_i$ of the same codimension as the codimension of $Z$ in $(TM/D)|_{U_i} \simeq U_i \times \R^{n-m}$. This codimension is clearly $n-m$ so the submanifold we want is of dimension $n - (n-m) =m$. This is the piece of the singular set in $U_i$. Patching these together and using the fact that we have a countable cover, we conclude that the singular set is an $m$-dimensional submanifold for a dense subset of the set of sections of the quotient bundle, which is the same as the set of control-affine systems.
\end{proof}

\subsection{Control fibrations}

In section~\ref{fibr-intro}, we gave a definition of a general control system with bounded control action and a convexity assumption, motivated by considerations of generalized controls and we shall use this definition without further comment. We now make a further assumption on the control fibration $\cC \to M$, to reflect the fact that the control indicatrix comes usually as a subset of some affine subspace of the tangent space.

First, then, we assume that a (singular) distribution $D \subset TM$ is given, of generic dimension $m$. We have, again, an open and dense submanifold $M_0$ on which the distribution is regular, of rank $m$.
The control fibration is now further restricted to having each fibre a compact, convex subset of some element of $T_p M_0 / D_p$, in other words an affine subspace, and we take it to have non-empty interior (if not, then we argue it should belong to a stratum of lower dimension.) We shall say that the fibration is \emph{adapted} to the distribution $X \in \Gamma ( TM_0 /D )$.

The statement of the theorem to follow includes a final extra assumption on $\cC$. In order to obtain a genericity statement, we shall consider control fibrations which, over the submanifold $M_0$ are actually smooth manifolds with boundary:  a control fibration  $\cC$ adapted to the affine control distribution will now be taken to be a smooth \emph{immersion} of an $m$-manifold with boundary, $i : \cC \to TM$, so the compact fibres are the intersections of the image of the manifold $\cC$ with each $X(p)$.
Thus, in our fibration diagram
\begin{equation*}
\begin{tikzcd}
\cC \arrow[r, "i"] \arrow[dr,"p"] & TM \arrow{d}{\pi} \\
 &  M 
\end{tikzcd}
\end{equation*}
the space $\cC$ is now a manifold and the map $i$ is the immersion.

We consider the usual topology on the space of such immersions, as is done in Differential Topology.

\begin{theorem}
For a dense, open subset of the space of immersions adapted to $D$, the singular set is an $m$-dimensional manifold.
\end{theorem}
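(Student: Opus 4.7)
The plan is to mimic the proof of the preceding theorem: realize the singular set as the preimage of the zero section of $TM$ under the immersion $i$, and invoke Thom-style transversality. Let $Z \subset TM$ denote the image of the zero section. A point $q \in M$ lies in $\Sigma$ precisely when the fibre $i(\cC_q)$ contains the zero vector of $T_qM$, i.e., when $i^{-1}(Z) \cap \cC_q \ne \emptyset$. Thus $\Sigma = p(i^{-1}(Z))$, and the task reduces to showing that for generic $i$ the set $i^{-1}(Z)$ is an $m$-dimensional submanifold of $\cC$ and that $p$ restricts to a diffeomorphism onto its image.

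Work over the open dense submanifold $M_0 \subset M$ on which $D$ has rank $m$. By the adaptation hypothesis, each fibre of $\cC$ is a compact convex body with non-empty interior in an $m$-dimensional affine translate of $D_q$, so $\cC$ is a manifold with boundary whose dimension over $M_0$ is $n + m$. Since $\dim TM = 2n$ and $\dim Z = n$, the zero section has codimension $n$ in $TM$. Apply Thom's transversality theorem in the version for immersions of manifolds with boundary, in the Whitney strong $C^\infty$ topology: the set of $i$ such that both $i|_{\mathring{\cC}}$ and $i|_{\partial \cC}$ are transverse to $Z$ is dense, and is open because each fibre of $\cC$ is compact, so the local-to-global patching argument from the previous theorem applies once a countable cover of $M_0$ by relatively compact coordinate charts has been fixed. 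For such $i$, $i^{-1}(Z)$ is a submanifold of $\cC$ of dimension $(n+m) - n = m$.

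To obtain $\Sigma$ itself as an $m$-manifold, check that $p|_{i^{-1}(Z)} : i^{-1}(Z) \to M_0$ is a diffeomorphism onto its image. Injectivity is immediate: inside the affine translate containing $i(\cC_q)$ the zero vector is a single point, and since $i$ restricted to a fibre is a diffeomorphism onto its (convex) image, there can be at most one preimage above any $q$. For the immersion property, recall that $Z$ is the image of a section of $\pi: TM \to M$, so it is transverse to every vertical fibre of $\pi$; on the other hand, each fibre of $p$ is mapped by $i$ entirely into one such vertical fibre. Transversality of $i$ to $Z$ therefore forces the tangent space of $i^{-1}(Z)$ at any point to project isomorphically onto $T_qM_0$ under $dp$, which is exactly the condition needed for $p|_{i^{-1}(Z)}$ to be an immersion, and hence a diffeomorphism onto an $m$-dimensional submanifold of $M_0$.

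The main obstacle I anticipate is the careful handling of the boundary $\partial \cC$. Points of $i^{-1}(Z) \cap \partial \cC$ correspond to singular states realised only by a \emph{saturated} (boundary) control, and without separate transversality for $\partial \cC$ the projected set could fail to be a smooth manifold along these points; the remedy is the standard boundary version of the transversality theorem, but the patching of the "interior" and "boundary" pieces of $\Sigma$ should be spelled out explicitly, and one may wish to state the result allowing $\Sigma$ to be a manifold with boundary. A secondary technical point is confirming openness in the Whitney strong topology when $M_0$ is non-compact; this is where compactness of the fibres of $\cC$ enters essentially, through properness of $i$ along each fibre of $p$.
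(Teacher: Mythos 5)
Your proposal follows essentially the same route as the paper's proof: the boundary version of the Thom transversality theorem applied to the immersion $i$ and the $n$-dimensional zero section $Z$, giving $i^{-1}(Z)$ as an $m$-dimensional submanifold of $\cC$ for a dense set of adapted immersions, with compactness of the fibres (hence properness of $p$) supplying openness; in fact you spell out details the paper omits, namely the projection of $i^{-1}(Z)$ down to $M_0$ via $p$ and the possible boundary points of $\Sigma$ arising from saturated controls. One small slip: $dp$ carries the $m$-dimensional tangent space of $i^{-1}(Z)$ injectively \emph{into} the $n$-dimensional space $T_q M_0$ (isomorphically onto its image, the tangent space of $\Sigma$), not isomorphically onto $T_q M_0$ itself, but this does not affect the argument.
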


\begin{proof}
We shall need the form of the Thom transversality theorem for manifolds with boundary (see for example~\cite{guill-pol}, Chapter 2.) 

The setting is again that we have the $n$-dimensional submanifold $Z \subset TM$, the zero section and a smooth map of a manifold with boundary into the vector bundle $TM/D$. By composing with the projection, we get the map $p = \pi \circ i$. 
Applying the transversality theorem we get that for a dense subset of the set of such maps, we get a transverse intersection with the zero section $Z$. The inverse image is a submanifold of $\cC$. 
Since the projection map $\pi$ is open and the fibres of $\cC$ are compact, it follows that we have a proper map. This allows us to conclude the openness of the set of fibrations giving a singular set which is an $m$-manifold, in addition to the density. The details are omitted.
\end{proof}

\section{The singular set and global control design}

Once we have established that, for a generic control system, the singular set is a manifold of the same dimension as the control (at least on some dense subset of state space), we have available the two $m$ dimensional geometric objects, the regular control distribution and the singular manifold $\Sigma^m$. The global control dynamics methodology which we have developed in a series of works (e.g.\ \cite{ek1}, \cite{ek2}, \cite{ek3}), has as its main tool the following geometric object:
\begin{definition}
A \textbf{control-transverse manifold} is a smooth submanifold $W$ of $M^n$ of complementary dimension to the control distribution, namely $(n-m)$, and which is everywhere transverse to $D$:
\[ \forall p \in W : \; T_p W \oplus D_p = T_p M . \]
\end{definition}
The local existence of such transverse manifolds is easy to establish, given the local trivializations of any vector bundle.

The importance of these geometric objects is that on them are defined, in a completely natural and invariant way, dynamics, which we call control-transverse dynamics.
Let us explain this, in the case of a control-affine system: starting with the quotient vector space construction, we then have a short exact sequence of vector bundles
\[ 0 \to D \to TM \to TM/D \to 0 . \]
Note that there is no natural selection of a complement to the vector subspace $D_p$ of $T_p M$ (there is certainly no need for a metric yet.) The control transverse submanifold $W$ provides exactly such a complement, as we saw, along $W$:
\[ 0 \to D|_W \to TM |_W \to TW \to 0 . \]
In fact, we can generalize to the notion of a \emph{control-transverse foliation}, which is taken to be a regular foliation everywhere transverse to $D$ and of complementary dimension. Again, the local existence is not difficult, but we are by no means implying that it exists globally.

Now we can lift the short exact sequence at the level of sections
\[ 0 \to \Gamma ( D ) \to \Gamma ( TM ) \to \Gamma ( TM/D ) \to 0 , \]
and, restricting to $W$, we get
\[ 0 \to \Gamma (  D|_W ) \to \Gamma (TM |_W) \to \Gamma ( TW ) \to 0 . \]
This sequence is split, so that a vector field decomposes into a section of the control distribution and a section of the tangent bundle $TW$, in other words a dynamical system (vector field) on $W$.

A first crucial remark is that the control-transverse manifolds are `soft', in the sense that, being transverse, they can be deformed quite freely. The second crucial point is that \emph{the geometry of the chose control-transverse manifold determines the dynamics!} The final point is that in the directions transverse to the chosen $W$, we have available the control directions, and we can therefore design any dynamics we wish  --for example, we can make the control-transverse manifold invariant, and asymptotically stable.

As an illustration, let us go back to a single-input linear control system, in controller-canonical form. We explained in section~\ref{lcs1} that the singular set is the $x_1$ axis. Since $D= \spanek ( e_n )$, every graph of a linear function $x_n = \sum_{i=1}^{n-1} k_i x_i $ will give a control-transverse manifold, here a vector subspace of dimension $(n-1)$ (a hyper-plane). The control-transverse dynamics are easily seen to be
\begin{align*}
 \dot x_1 & =  x_2 \\
 \dot x_2 & = x_3 \\
 \cdots & \cdots \\
 \dot x_{n-1} & = x_n = \sum_{i=1}^{n-1} k_i x_i ,
\end{align*}
which of course can be made to have arbitrary poles, depending on the choice of the coefficients $k_i$. So here different hyper-planes (geometry) give different stability properties (dynamics). Note that we get the same by considering linear feedback control $u(x) = \sum_{i=1}^n k_i x_i $ and considering the set where $u(x) =0$. The fact that the coefficient of $x_n$ is non-zero (so we get control-transverse object) makes sense since, otherwise, the last column of the feedback matrix $A + b k^t$ is zero. 

Now we also see the role of the singular set: we had better make the chosen hyper-plane transverse to the singular set as well, since, otherwise, we get a non-isolated equilibrium point! (since in that case the first column would be zero.)

The general case proceeds in a similar way. We take into account the singular set in selecting control-transverse submanifolds that are transverse to it, so we get isolated equilibrium points, and we control the overall dynamics in two stages: first, by picking the geometry of $W$ to obtain desirable control-transverse dynamics and then use the control directions to design the overall dynamics. 
The deformations of $W$ may lead to bifurcations of the control-transverse dynamics.
When we say bifurcations, of course, we mean that the setting is similar to that of the classical bifurcation theory for vector fields. A more precise definition would use the notion of a \emph{parametrized family of transverse manifolds}, $W ( \mu )$, with each manifold in the family staying transverse to $D$ -- but not necessarily to the singular set!
An example of such a bifurcation is given in~\cite{ek1}.

In fact, it is easy to verify, in many cases, the conditions for having a particular bifurcation (say the saddle-node one), since we have complete freedom in selecting the transverse $W$, locally (as we just saw in the special case of linear systems).

The full development of this theory for obtaining global controlled dynamics is yet to be completed. It seems unavoidable that we shall need to take into account the stratification of the control fibration if we are to have a truly global theory. There is in recent years a significant amount of research in other areas of mathematics and physics attempting to deal with these same issues (e.g.\ in Poisson geometry and the notion of Lefschetz fibrations \cite{androul}, \cite{akbulut}.)


\medskip

\rightline{\emph{School of Mathematics, Aristotle University of Thessaloniki, Greece}}

\end{document}